\newfont{\bb}{msbm10 at 12pt}
\def\r{\hbox{\bb R}}
\def\e{\hbox{\bf E}}
\def\v{\hbox{\bf V}}
\newtheorem{theorem}{Theorem}[section]
\newtheorem{definition}[theorem]{Definition}
\newtheorem{corollary}[theorem]{Corollary}
\begin{document}

\title{Some Characterizations of Cylindrical Helices  in $\e^n$}
\author{ Ahmad T. Ali\\Mathematics Department\\
 Faculty of Science, Al-Azhar University\\
 Nasr City, 11448, Cairo, Egypt\\
email: atali71@yahoo.com\\
\vspace*{1cm}\\
 Rafael L\'opez\footnote{Partially
supported by MEC-FEDER
 grant no. MTM2007-61775 and
Junta de Andaluc\'{\i}a grant no. P06-FQM-01642.}\\
Departamento de Geometr\'{\i}a y Topolog\'{\i}a\\
Universidad de Granada\\
18071 Granada, Spain\\
email: rcamino@ugr.es}

\maketitle
\begin{abstract} We consider a unit speed curve $\alpha$ in Euclidean $n$-dimensional space $\e^n$ and denote the Frenet frame by $\{\v_1,\ldots,\v_n\}$. We say that $\alpha$ is a cylindrical helix if its tangent vector $\v_1$ makes a constant angle with a fixed direction $U$. In this work we give different characterizations of such curves in terms of their curvatures.

\end{abstract}

\emph{MSC:}  53C40, 53C50

\emph{Keywords}: Euclidean n-space; Frenet equations;  Cylindrical helices curves.

\section{Introduction and statement of results}
An helix in Euclidean 3-space $\e^3$ is a curve where the tangent lines make a constant angle with a fixed direction. An helix curve is characterized by the fact that the ratio $\kappa/\tau$ is constant along the curve, where $\kappa$ and $\tau$ are the curvature and the torsion of
$\alpha$, respectively. Helices are well known curves in classical differential geometry of space curves \cite{mp} and we refer to the reader for recent works on this type of curves \cite{gl1,sc}. Recently, Magden \cite{ma} have introduced the concept of cylindrical helix in Euclidean 4-space $\e^4$ saying that the tangent lines makes a constant angle with a fixed directions. He characterizes a cylindrical helix in $\e^4$ if and only if the function
\begin{equation}\label{eqma}
\Big(\dfrac{\kappa_1}{\kappa_2}\Big)^2+\Big(\dfrac{1}{\kappa_3}\Big(\dfrac{\kappa_1}{\kappa_2}\Big)^{\prime}\Big)^2
\end{equation}
is constant along the curve , where $\kappa_3$ and $\kappa_4$ are the third and the fourth curvature of the the curve. See also \cite{oh}.

In this work we consider the generalization of the concept of general helices in Euclidean n-space $\e^n$. Let $\alpha:I\subset\r\rightarrow\e^n$ be an arbitrary curve in $\e^n$. Recall that the curve $\alpha$ is said to be of unit speed (or parameterized by arc-length function $s$) if $\langle\alpha'(s),\alpha'(s)\rangle=1$, where $\langle,\rangle$ is the standard scalar product in  Euclidean space $\e^n$ given by
$$
\langle X,Y\rangle=\sum_{i=1}^{n}\,x_i\,y_i,
$$
for each $X=(x_1,\ldots,x_n)$, $Y=(y_1,\ldots,y_n)\in\e^n$.

Let $\{\v_1(s),\ldots,\v_n(s)\}$ be the moving frame along $\alpha$, where the vectors $\v_i$ are mutually orthogonal vectors satisfying $\langle\v_i,\v_i\rangle=1$.
The Frenet equations for $\alpha$ are given by (\cite{gl1})
\begin{equation}\label{u2}
 \left[
   \begin{array}{c}
     \v_1' \\
     \v_2' \\
     \v_3' \\
     \vdots\\
     \v_{n-1}'\\
     \v_{n}'\\
   \end{array}
 \right]=\left[
           \begin{array}{cccccccc}
             0 & \kappa_1 & 0 & 0 & \cdots & 0 & 0\\
             -\kappa_1 & 0 & \kappa_2 & 0 & \cdots & 0 & 0\\
             0 & -\kappa_2 & 0 & \kappa_3 & \cdots & 0 & 0\\
             \vdots & \vdots & \vdots & \vdots & \ddots & \vdots & \vdots\\
             0 & 0 & 0 & 0 & \cdots & 0 & \kappa_{n-1}\\
             0 & 0 & 0 & 0 & \cdots & -\kappa_{n-1} & 0\\
           \end{array}
         \right]\left[
   \begin{array}{c}
     \v_1 \\
     \v_2 \\
     \v_3 \\
     \vdots\\
     \v_{n-1}\\
     \v_{n}\\
   \end{array}
 \right].
 \end{equation}

Recall the functions $\kappa_i(s)$ are called the i-th curvatures of $\alpha$. If $\kappa_{n-1}(s)=0$ for any $s\in I$, then $\v_n(s)$ is a constant vector $V$ and the curve $\alpha$ lies in a $(n-1)$-dimensional affine subspace orthogonal to $V$, which is isometric to the Euclidean $(n-1)$-space $\e^{n-1}$. We will assume throughout this work that all the  curvatures satisfy $\kappa_i(s)\not=0$ for any $s\in I$, $1\leq i\leq n-1$.

\begin{definition} A unit speed curve $\alpha:I\rightarrow\e^n$ is called cylindrical helix if its tangent vector $\v_1$ makes a constant angle with a fixed direction  $U$.
\end{definition}

Our main result in this work is the following characterization of cylindrical helices in Euclidean $n$-space $\e^n$.

\begin{theorem}\label{th-main} Let $\alpha:I\rightarrow\e^n$ be a unit speed curve in $\e^n$. Define the functions
\begin{equation}\label{u211}
\begin{array}{ll}
G_1=1,\,\,\,G_2=0,\,\,\,G_{i}=\dfrac{1}{\kappa_{i-1}}\Big[\kappa_{i-2}G_{i-2}+G_{i-1}^{\prime}\Big],\ 3\leq i\leq n.
\end{array}{}
\end{equation}
Then $\alpha$ is a cylindrical helix if and only if
the function
\begin{equation}\label{u21}
\sum_{i=3}^{n}\,G_i^2=C
\end{equation}
is constant. Moreover, the constant $C=\tan^2\theta$, being $\theta$ the angle that makes $\v_1$ with the fixed direction $U$ that determines $\alpha$.
\end{theorem}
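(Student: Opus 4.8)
The plan is to identify the fixed direction $U$ with an explicit combination of the Frenet vectors whose coefficients are exactly the functions $G_i$. The heart of the argument is the same computation in both directions: decomposing a constant vector in the moving frame $\{\v_1,\ldots,\v_n\}$ turns the condition $U'=0$ into a recursion on the coefficients $\langle U,\v_i\rangle$ that coincides with \eqref{u211}.

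For the forward implication, suppose $\alpha$ is a cylindrical helix, so there is a unit vector $U$ with $\langle\v_1,U\rangle=\cos\theta$ constant. Writing $U=\sum_{i=1}^n a_i\v_i$ with $a_i=\langle U,\v_i\rangle$ and differentiating using \eqref{u2} (recall $\v_i'=-\kappa_{i-1}\v_{i-1}+\kappa_i\v_{i+1}$), one gets $a_1'=\kappa_1 a_2$, the middle relations $a_i'=-\kappa_{i-1}a_{i-1}+\kappa_i a_{i+1}$ for $2\le i\le n-1$, and $a_n'=-\kappa_{n-1}a_{n-1}$. Since $a_1=\cos\theta$ is constant, the first relation forces $a_2=0$, and rewriting the middle relations as $a_{i+1}=\kappa_i^{-1}(a_i'+\kappa_{i-1}a_{i-1})$ is precisely the recursion \eqref{u211}. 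By induction $a_i=\cos\theta\,G_i$, matching $a_1=\cos\theta\cdot G_1$ and $a_2=\cos\theta\cdot G_2$. Finally, orthonormality of $\{\v_i\}$ and $|U|=1$ give $\sum_i a_i^2=1$, i.e. $\cos^2\theta\,\sum_i G_i^2=1$; using $G_1=1,\ G_2=0$ this reads $\cos^2\theta\,(1+\sum_{i=3}^n G_i^2)=1$, so $\sum_{i=3}^n G_i^2=\sec^2\theta-1=\tan^2\theta$, which is constant and pins down $C=\tan^2\theta$.

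For the converse, assume $\sum_{i=3}^n G_i^2=C$ and set $W=\sum_{i=1}^n G_i\v_i$. A direct computation with \eqref{u2}, using \eqref{u211} to annihilate the coefficients of $\v_1,\ldots,\v_{n-1}$, collapses $W'$ to the single term $W'=(G_n'+\kappa_{n-1}G_{n-1})\,\v_n$, so I must show this factor vanishes. The key is a telescoping identity: rewriting \eqref{u211} as $G_j'=\kappa_j G_{j+1}-\kappa_{j-1}G_{j-1}$ for $1\le j\le n-1$ (with the convention $\kappa_0:=0$) and summing, the cross terms cancel in pairs and leave $\tfrac12\frac{d}{ds}\sum_{i=1}^n G_i^2=\sum_i G_iG_i'=G_n(G_n'+\kappa_{n-1}G_{n-1})$. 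Since $G_1^2+G_2^2=1$ is constant, the hypothesis makes the left side zero, so $G_n(G_n'+\kappa_{n-1}G_{n-1})=0$; where $G_n\neq0$ this yields $W'=0$. Then $W$ is a fixed vector with $\langle\v_1,W\rangle=G_1=1$ and $|W|=\sqrt{1+C}$, so $U=W/|W|$ is a constant unit direction making the fixed angle $\theta=\arccos(1/\sqrt{1+C})$ with $\v_1$, whence $\alpha$ is a cylindrical helix with $C=\tan^2\theta$.

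The step I expect to be the main obstacle is the last deduction in the converse: the hypothesis only delivers the product $G_n(G_n'+\kappa_{n-1}G_{n-1})=0$, whereas I need the second factor alone to vanish so that $W$ is genuinely constant. On the open set where $G_n\neq0$ this is automatic; the delicate part is the vanishing locus of $G_n$, which I would handle by continuity of $W'$ together with the standing nondegeneracy assumption $\kappa_{n-1}\neq0$. Establishing the telescoping identity cleanly — verifying that the coefficients collapse to exactly $G_n(G_n'+\kappa_{n-1}G_{n-1})$ — is the one computation I would carry out with care, since it simultaneously drives the converse and recovers the value $C=\tan^2\theta$.
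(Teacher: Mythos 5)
Your forward implication is correct, and it is actually more economical than the paper's: after forcing $a_2=0$ and identifying $a_i=\cos\theta\,G_i$ by induction on the Frenet recursion, you read the constancy of $\sum_{i=3}^nG_i^2$ directly off $|U|^2=\sum_ia_i^2=1$. The paper instead reparametrizes by $t=\int\kappa_{n-1}\,ds$, solves the forced oscillator $G_n''+G_n=\kappa_{n-2}G_{n-2}/\kappa_{n-1}$ explicitly, and integrates a telescoped identity to reach the same conclusion --- only to evaluate $C$ at the very end by exactly your $|U|=1$ computation. So that half of your argument is sound and cleaner.

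The converse is where the genuine gap lies, and you have located it precisely but not closed it. Differentiating the hypothesis yields only $G_n\big(G_n'+\kappa_{n-1}G_{n-1}\big)=0$, whereas $W'=\big(G_n'+\kappa_{n-1}G_{n-1}\big)\v_n$ requires the second factor to vanish identically. Your proposed repair (``continuity of $W'$ together with $\kappa_{n-1}\neq0$'') does not work: on an open interval where $G_n\equiv0$ one also has $G_n'\equiv0$ there, so the offending factor reduces to $\kappa_{n-1}G_{n-1}$, and nothing forces $G_{n-1}$ to vanish. This is not a removable technicality for $n\geq4$: take a $W$-curve in $\e^4$, say $s\mapsto(a\cos\lambda s,a\sin\lambda s,b\cos\mu s,b\sin\mu s)$ with $\lambda\neq\mu$, so that $\kappa_1,\kappa_2,\kappa_3$ are nonzero constants. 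Then $G_3=\kappa_1/\kappa_2$ is a nonzero constant and $G_4=0$, hence $G_3^2+G_4^2$ is constant, yet $\langle\v_1(s),U\rangle$ is constant only for $U=0$, so the curve is not a cylindrical helix. (For $n=3$ the converse does go through, since $G_3^2=C$ forces $G_3$ itself to be constant.) You should know that the paper's own proof of the converse contains exactly the same gap: it defines the same vector $U$ and asserts $dU/ds=0$ ``by taking account of (\ref{u6})'', but (\ref{u6}) only annihilates the components of $U'$ along $\v_2,\ldots,\v_{n-1}$; the missing identity $G_n'+\kappa_{n-1}G_{n-1}=0$ is (\ref{u62}), which was derived from the helix assumption and is not a consequence of the constancy of $\sum_{i=3}^nG_i^2$. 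The version of the converse that actually holds is Theorem \ref{th-2}, where that identity is imposed as an additional hypothesis.
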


This theorem generalizes in arbitrary dimensions what happens for $n=3$ and $n=4$, namely: if $n=3$, (\ref{u21}) writes
$G_3^2=\kappa_1/\kappa_2=\kappa/\tau$ and for $n=4$, (\ref{u21}) agrees with (\ref{eqma}).

\section{Proof of Theorem \ref{th-main}}

Let $\alpha$ be a unit speed curve in $\e^n$. Assume that $\alpha$ is a cylindrical helix curve. Let $U$ be the direction with which $\v_1$ makes a constant angle $\theta$ and, without loss of generality, we suppose that $\langle U,U\rangle=1$.  Consider the differentiable functions $a_i$, $1\leq i\leq n$,
\begin{equation}\label{u3}
U=\sum_{i=1}^{n}\,a_i(s)\,\v_i(s),\ \ s\in I,
\end{equation}
that is,
$$a_i=\langle\v_i,U\rangle,\ 1\leq i\leq n.$$
Then the function  $a_1(s)=\langle \v_1(s),U\rangle$ is constant, and it agrees with $\cos\theta$:
\begin{equation}\label{u311}
a_1(s)=\langle\v_1,U\rangle=\cos\theta
\end{equation}
for any $s$. By differentiation (\ref{u311}) with respect to $s$ and using the Frenet formula (\ref{u2}) we have
$$a_1'(s)=\kappa_1\,\langle\v_2,U\rangle=\kappa_1\,a_2=0.$$
Then $a_2=0$ and therefore $U$ lies in the subspace $Sp(\v_1,\v_3,\ldots,\v_n)$. Because the vector field $U$ is constant, a differentiation in (\ref{u3}) together (\ref{u2}) gives the following ordinary differential equation system
\begin{equation}\label{u5}
\left.\begin{array}{ll}
\kappa_1 a_1-\kappa_2 a_3&=0\\
a_3'-\kappa_3 a_4&=0\\
a_4'+\kappa_3 a_3-\kappa_4 a_5&=0\\
\vdots\\
a_{n-1}'+\kappa_{n-2} a_{n-2}-\kappa_{n-1} a_n&=0\\
a_n'+\kappa_{n-1} a_{n-1} &=0
\end{array}\right\}
\end{equation}
Define the functions $G_i=G_i(s=$ as follows
\begin{equation}\label{u51}
a_i(s)=G_i(s)\,a_1, \  3\leq i\leq n.
\end{equation}
We point out that $a_1\not=0$: on the contrary, (\ref{u5}) gives $a_i=0$, for $3\leq i\leq n$ and so, $U=0$: contradiction.
The first $(n-2)$-equations in (\ref{u5}) lead to
\begin{equation}\label{u6}
\left.
\begin{array}{ll}
&G_3=\dfrac{\kappa_1}{\kappa_2}\\
&G_4=\dfrac{1}{\kappa_3}G_3^{\prime}\\
&G_5=\dfrac{1}{\kappa_4}\Big[\kappa_3G_3+G_4^{\prime}\Big]\\
&\vdots\\
&G_{n-1}=\dfrac{1}{\kappa_{n-2}}\Big[\kappa_{n-3}G_{n-3}+G_{n-2}^{\prime}\Big]\\
&G_{n}=\dfrac{1}{\kappa_{n-1}}\Big[\kappa_{n-2}G_{n-2}+G_{n-1}^{\prime}\Big]
\end{array}\right\}
\end{equation}
The last equation of (\ref{u5}) leads to the following condition;
\begin{equation}\label{u62}
G_n'+\kappa_{n-1} G_{n-1}=0.
\end{equation}
We do the change of variables:
$$
t(s)=\int^s\kappa_{n-1}(u) du,\hspace*{.5cm}\frac{dt}{ds}=\kappa_{n-1}(s).
$$
In particular, and from the last equation of (\ref{u6}), we have
$$
G_{n-1}'(t)=G_n(t)-\Big(\frac{\kappa_{n-2}(t)}{\kappa_{n-1}(t)}\Big)G_{n-2}(t).
$$
As a consequence, if $\alpha$ is a cylindrical helix, substituting the equation (\ref{u62}) in the last equation yields
$$G_n''(t)+G_n(t)=\frac{\kappa_{n-2}(t)G_{n-2}(t)}{\kappa_{n-1}(t)}.$$
The general solution of this equation is
\begin{equation}\label{u9}
G_n(t)=\Big(A-\int\frac{\kappa_{n-2}(t)G_{n-2}(t)}{\kappa_{n-1}(t)}\sin{t}\,dt\Big)\cos{t}+
\Big(B+\int\frac{\kappa_{n-2}(t)G_{n-2}(t)}{\kappa_{n-1}(t)}\cos{t}\,dt\Big)\sin{t},
\end{equation}
where $A$ and $B$ are arbitrary constants. Then (\ref{u9}) takes the following form
\begin{equation}\label{u10}
\begin{array}{ll}
G_n(s)=&\Big(A-\int\Big[\kappa_{n-2}(s)G_{n-2}(s)\sin{\int\kappa_{n-1}(s)ds}\Big]ds\Big)\cos{\int\kappa_{n-1}(s)ds}\\
&+
\Big(B+\int\Big[\kappa_{n-2}(s)G_{n-2}(s)\cos{\int\kappa_{n-1}(s)ds}\Big]ds\Big)\sin{\int\kappa_{n-1}(s)ds}.
\end{array}
\end{equation}
From (\ref{u62}), the function $G_{n-1}$ is given by
\begin{equation}\label{u11}
\begin{array}{ll}
G_{n-1}(s)=&\Big(A-\int\Big[\kappa_{n-2}(s)G_{n-2}(s)\sin{\int\kappa_{n-1}(s)ds}\Big]ds\Big)\sin{\int\kappa_{n-1}(s)ds}\\
&-\Big(B+\int\Big[\kappa_{n-2}(s)G_{n-2}(s)\cos{\int\kappa_{n-1}(s)ds}\Big]ds\Big)\cos{\int\kappa_{n-1}(s)ds}.
\end{array}
\end{equation}
From Equation (\ref{u6}) and (\ref{u11}), we have
\begin{eqnarray*}
\sum_{i=3}^{n-2}G_i G_i^{\prime}&=&G_3\kappa_3G_4+G_4\Big(\kappa_4G_5-\kappa_3G_3\Big)+\ldots\\
&+& G_{n-3}\Big(\kappa_{n-3}G_{n-2}-\kappa_{n-4}G_{n-4}\Big)+G_{n-2}G_{n-2}^{\prime}\\
&=&G_{n-2}\Big(G_{n-2}'+\kappa_{n-3}G_{n-3}\Big)\\
&=&\kappa_{n-2}G_{n-2}G_{n-1}
\end{eqnarray*}
If we integrate the above equation, we have
\begin{equation}\label{u15}
\begin{array}{ll}
\sum_{i=3}^{n-2}G_{i}^{2}&=C-\Big(A-\int\Big[\kappa_{n-2}(s)G_{n-2}(s)\sin{\int\kappa_{n-1}ds}\Big]ds\Big)^2\\
&-
\Big(B+\int\Big[\kappa_{n-2}(s)G_{n-2}(s)\cos{\int\kappa_{n-1}ds}\Big]ds\Big)^2,
\end{array}
\end{equation}
where $C$ is a constant of integration. From Equations (\ref{u10}) and (\ref{u11}), we have
\begin{equation}\label{u151}
\begin{array}{ll}
G_n^2+G_{n-1}^2&=\Big(A-\int\Big[\kappa_{n-2}(s)G_{n-2}(s)\sin{\int\kappa_{n-1}ds}\Big]ds\Big)^2\\
&+
\Big(B+\int\Big[\kappa_{n-2}(s)G_{n-2}(s)\cos{\int\kappa_{n-1}ds}\Big]ds\Big)^2,
\end{array}
\end{equation}
It follows from  (\ref{u15}) and (\ref{u151}) that
$$\sum_{i=3}^{n}G_{i}^{2}=C.$$
Moreover this constant $C$ calculates as follows.
From (\ref{u51}), together the $(n-2)$-equations (\ref{u6}), we have
$$C=\sum_{i=3}^{n}\,G_i^2=\dfrac{1}{a_1^2}\sum_{i=3}^{n}\,a_i^2=\dfrac{1-a_1^2}{a_1^2}=\tan^2\theta,$$
where we have used (\ref{u2}) and the fact that $U$ is a unit vector field.

We do the converse of Theorem. Assume that the condition (\ref{u6}) is satisfied for a curve $\alpha$. Let $\theta\in\r$ be so that $C=\tan^2\theta$. Define the unit vector $U$  by
$$U=\cos\theta\Big[\v_1+\sum_{i=3}^{n}\,G_i\,\v_i\Big].$$
By taking account (\ref{u6}), a differentiation of $U$ gives that $\dfrac{dU}{ds}=0$, which it means that $U$ is a constant vector field. On the other hand, the scalar product between the unit tangent vector field $\v_1$ with $U$ is
$$
\langle\v_1(s),U\rangle=\cos\theta.
$$
Thus $\alpha$ is a cylindrical helix curve. This finishes  the proof of Theorem \ref{th-main}.

As a direct consequence of the proof, we generalize Theorem \ref{th-main} in Minkowski space and  for timelike curves.

\begin{theorem} Let $\e_1^n$ be the Minkowski n-dimensional space  and let $\alpha:I\rightarrow\e_1^n$ be a unit speed timelike curve.
Then $\alpha$ is a cylindrical helix if and only if
the function $\sum_{i=3}^{n}\,G_i^2$ is constant, where the functions $G_i$ are defined as in (\ref{u211}).
\end{theorem}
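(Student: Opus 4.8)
The plan is to imitate, line by line, the proof of Theorem \ref{th-main}, keeping track only of the sign changes forced by the indefinite metric. Since $\alpha$ is timelike I would normalize the Frenet frame so that $\langle\v_1,\v_1\rangle=-1$ and $\langle\v_i,\v_i\rangle=1$ for $2\le i\le n$. Imposing that the connection matrix be skew-adjoint with respect to this product (differentiate $\langle\v_i,\v_j\rangle=\pm\delta_{ij}$) changes exactly one entry of (\ref{u2}): the equation for $\v_2'$ becomes $\v_2'=\kappa_1\v_1+\kappa_2\v_3$, whereas $\v_1'=\kappa_1\v_2$, the relations $\v_i'=-\kappa_{i-1}\v_{i-1}+\kappa_i\v_{i+1}$ for $3\le i\le n-1$, and $\v_n'=-\kappa_{n-1}\v_{n-1}$ are unchanged.

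For the direct implication I would expand the constant direction as $U=-a_1\v_1+\sum_{i=2}^{n}a_i\v_i$ with $a_i=\langle\v_i,U\rangle$. The constant-angle hypothesis says $a_1=\langle\v_1,U\rangle$ is constant; differentiating and using $\kappa_1\ne0$ gives $a_2=0$ as before. Differentiating $U$ and inserting the modified Frenet equations yields the analogue of the system (\ref{u5}), whose only altered relation is $\kappa_1a_1+\kappa_2a_3=0$. Setting $\tilde G_i=a_i/a_1$ then gives $\tilde G_3=-\kappa_1/\kappa_2$, while all the remaining relations coincide in form with (\ref{u6}); since the recursion (\ref{u211}) is linear and homogeneous, the single sign flip at the bottom propagates to give $\tilde G_i=-G_i$ for every $i$, hence $\tilde G_i^2=G_i^2$. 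Now, because $U$ is constant, $\langle U,U\rangle=-a_1^2+\sum_{i=3}^{n}a_i^2$ is constant, so $\sum_{i=3}^{n}G_i^2=\sum_{i=3}^{n}\tilde G_i^2=\frac{1}{a_1^2}\sum_{i=3}^{n}a_i^2=\frac{\langle U,U\rangle}{a_1^2}+1$ is constant as well; this mirrors the computation that produces the value of $C$ at the end of the proof of Theorem \ref{th-main}.

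For the converse I would assume $\sum_{i=3}^{n}G_i^2=C$ and first recover the closing constraint: differentiating and using (\ref{u211}) collapses $\sum_{i=3}^{n}G_iG_i'$ to $G_n\bigl(G_n'+\kappa_{n-1}G_{n-1}\bigr)$, so that $G_n'+\kappa_{n-1}G_{n-1}=0$. I would then set $U=\v_1+\sum_{i=3}^{n}G_i\v_i$ and differentiate using the \emph{Lorentzian} Frenet relations: the coefficient of $\v_2$ is $\kappa_1-\kappa_2G_3=0$, the coefficient of each $\v_k$ with $3\le k\le n-1$ vanishes by the recursion, and the coefficient of $\v_n$ is exactly $G_n'+\kappa_{n-1}G_{n-1}=0$. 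Crucially, although $\v_2'$ now carries the extra term $\kappa_1\v_1$, this never contributes because $U$ contains no $\v_2$ component ($G_2=0$); hence $U'=0$, and $\langle\v_1,U\rangle=-1$ is constant, so $\alpha$ is a cylindrical helix.

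The main obstacle is purely one of sign bookkeeping: I must verify that the lone sign change in the connection matrix, after passing through the homogeneous recursion, flips each $\tilde G_i$ uniformly, so that the invariant depends only on the squares $\tilde G_i^2=G_i^2$ and the Lorentzian criterion becomes \emph{identical} to the Euclidean one. The one genuinely new feature to address is the causal character of $U$: since $\langle U,U\rangle=a_1^2\bigl(\sum_{i=3}^{n}G_i^2-1\bigr)$ may be positive, negative, or zero, the normalization of $U$ and the interpretation of the constant are no longer tied to $\tan^2\theta$, which is why the statement asserts only the constancy of $\sum_{i=3}^{n}G_i^2$.
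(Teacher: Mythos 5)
Your proposal is correct and follows exactly the route the paper intends: the paper's own proof of this theorem is simply the remark that the Euclidean argument carries over verbatim once the single Frenet equation $\v_2'=\kappa_1\v_1+\kappa_2\v_3$ is modified, and you carry out precisely that sign bookkeeping (showing $\tilde G_i=-G_i$ so the squared invariant is unchanged, and observing that the altered $\v_2'$ never enters because $U$ has no $\v_2$ component). Your closing remark on the possibly null causal character of $U$, explaining why the constant is no longer $\tan^2\theta$, is a worthwhile detail the paper leaves implicit.
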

\begin{proof} The proof carries the same steps as above and we omit the details. We only point out that the fact that $\alpha$ is timelike means that $\v_1(s)=\alpha'(s)$ is a timelike vector field. The other $V_i$ in the Frenet frame, $2\leq i\leq n$, are unit spacelike vectors and so, the second equation in (\ref{u2}) changes to $\v_2'=\kappa_1\v_1+\kappa_2\v_3$ (\cite{al1,ps}).

\end{proof}

\section{Further characterizations of cylindrical helices}

In this section we present  new characterizations of cylindrical helix in $\e^n$. The first one is a consequence of Theorem \ref{th-main}.

\begin{theorem} \label{th-2} Let $\alpha:I\subset R\rightarrow \e^n$ be a unit speed curve in Euclidean space $\e^n$. Then $\alpha$ is a cylindrical helix if and only if there exists a $C^2$-function $G_{n}(s)$ such that
\begin{equation}\label{u24}
G_{n}=\dfrac{1}{\kappa_{n-1}}\Big[\kappa_{n-2}G_{n-2}+G_{n-1}^{\prime}\Big],\,\,\,
\dfrac{dG_{n}}{ds}=-\kappa_{n-1}(s)G_{n-1}(s),
\end{equation}
where
$$G_1=1, G_2=0,G_{i}=\dfrac{1}{\kappa_{i-1}}\Big[\kappa_{i-2}G_{i-2}+G_{i-1}^{\prime}\Big],\ 3\leq i\leq n-1.$$
\end{theorem}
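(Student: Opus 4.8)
The plan is to read off both implications from the proof of Theorem \ref{th-main}, observing that the two requirements in (\ref{u24}) are exactly the two structural ingredients produced there: the first identity is the defining recursion (\ref{u211}) for the top function $G_n$, and the second identity $dG_n/ds=-\kappa_{n-1}G_{n-1}$ is precisely the condition (\ref{u62}) coming from the last equation of the system (\ref{u5}). Thus the statement is essentially a repackaging of Theorem \ref{th-main}, and the work is to make the equivalence precise in both directions while keeping track of which hypothesis supplies which identity.

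For the direct implication I would start from the assumption that $\alpha$ is a cylindrical helix and invoke the computation already carried out for Theorem \ref{th-main}. Writing $a_i=\langle\v_i,U\rangle$ and $a_i=G_i a_1$ for $3\le i\le n$, with $a_1=\cos\theta\neq0$ constant, the system (\ref{u5}) holds. Its first $(n-2)$ equations give the recursion defining $G_3,\ldots,G_{n-1}$ together with the first identity of (\ref{u24}), and its last equation, after dividing by the nonzero constant $a_1$, is exactly $G_n'+\kappa_{n-1}G_{n-1}=0$, the second identity of (\ref{u24}). Since the curvatures are smooth and nowhere zero, the $G_n$ furnished by the recursion is of class $C^2$, so a function with the required properties exists.

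For the converse I would use Theorem \ref{th-main} directly. Assuming a $C^2$ function $G_n$ satisfying (\ref{u24}), the first identity forces $G_n$ to be the recursively defined function of (\ref{u211}), so the telescoping computation from the proof of Theorem \ref{th-main} applies and yields
\[
\frac{d}{ds}\sum_{i=3}^{n}G_i^2=2\sum_{i=3}^{n}G_i G_i'=2\,G_n\big(G_n'+\kappa_{n-1}G_{n-1}\big).
\]
The second identity of (\ref{u24}) makes the right-hand side vanish, so $\sum_{i=3}^{n}G_i^2$ is constant, and Theorem \ref{th-main} then gives that $\alpha$ is a cylindrical helix.

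The main, and really the only, delicate point is an asymmetry between the two directions. In the converse the constancy of $\sum_{i=3}^{n}G_i^2$ follows cleanly from $G_n'+\kappa_{n-1}G_{n-1}=0$; but running the same identity backwards, constancy of $\sum_{i=3}^{n}G_i^2$ yields only the factored equation $G_n\big(G_n'+\kappa_{n-1}G_{n-1}\big)=0$, which is weaker than the desired second identity unless one excludes the zeros of $G_n$. I would avoid this subtlety in the direct implication by reading the condition straight off the last line $a_n'+\kappa_{n-1}a_{n-1}=0$ of the system (\ref{u5}), rather than off the scalar relation $\sum_{i=3}^{n}G_i^2=C$; this sidesteps the vanishing-of-$G_n$ issue entirely. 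Beyond this bookkeeping, everything reduces to the finite telescoping computation already established for Theorem \ref{th-main}, so no genuinely new difficulty arises.
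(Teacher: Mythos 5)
Your proof is correct, and in the forward direction it takes a genuinely different (and more careful) route than the paper. The paper's own proof of this theorem obtains the second identity of (\ref{u24}) by differentiating the constant function $\sum_{i=3}^{n}G_i^2$ from Theorem \ref{th-main}, arriving at $G_n\bigl(G_n'+\kappa_{n-1}G_{n-1}\bigr)=0$ and then asserting (\ref{u24}) without addressing the possible zeros of $G_n$ --- exactly the factorization issue you flag. Your choice to read $G_n'+\kappa_{n-1}G_{n-1}=0$ directly off the last equation $a_n'+\kappa_{n-1}a_{n-1}=0$ of the system (\ref{u5}), after dividing by the nonzero constant $a_1$, sidesteps that gap entirely and is the more rigorous argument; it does commit you to re-entering the setup of the proof of Theorem \ref{th-main} rather than quoting only its statement, but that is a small price. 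For the converse the difference is cosmetic: the paper constructs the constant vector $U=\cos\theta\bigl[\v_1+\sum_{i=3}^{n}G_i\v_i\bigr]$ and differentiates it, whereas you run the telescoping identity to get constancy of $\sum_{i=3}^{n}G_i^2$ and then invoke Theorem \ref{th-main}, whose converse is proved by precisely that same construction of $U$ --- so the two converses coincide up to one level of indirection. In short, your argument is sound, matches the paper in the converse, and improves on the paper in the direct implication.
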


\begin{proof}
Let now assume that $\alpha$ is a cylindrical helix. By using Theorem \ref{th-main} and by differentiation the (constant) function  given in
(\ref{u21}), we obtain

\begin{eqnarray*}
0&=&\sum_{i=3}^{n}G_i\,G_i^{\prime}\\
&=&G_3\kappa_3G_4+G_4\Big(\kappa_4G_5-\kappa_3G_3\Big)+...+G_{n-1}\Big(\kappa_{n-1}G_{n}-\kappa_{n-2}G_{n-2}\Big)+G_{n}G_{n}^{\prime}\\
&=&G_{n}\Big(G_{n}'+\kappa_{n-1}G_{n-1}\Big).
\end{eqnarray*}
This shows (\ref{u24}). Conversely, if (\ref{u24}) holds, we define a  vector field $U$ by
$$U=\cos\theta\Big[\v_1+\sum_{i=3}^{n}\,G_i\,\v_i\Big].$$
By the Frenet equations (\ref{u2}), $\frac{dU}{ds}=0$, and so, $U$ is constant. On the other hand,  $\langle\v_1(s),U\rangle=\cos\theta$ is constant, and this means that $\alpha$ is a cylindrical helix.
\end{proof}

We end  giving an integral characterization of a cylindrical helix.

\begin{theorem} Let  $\alpha:I\subset R\rightarrow \e^n$ be a unit speed curve in Euclidean space $\e^n$. Then $\alpha$ is a cylindrical helix if and only if the following condition is satisfied
\begin{equation}\label{u244}
\begin{array}{ll}
G_{n-1}(s)=&\Big(A-\int\Big[\kappa_{n-2}G_{n-2}\sin{\int\kappa_{n-1}ds}\Big]ds\Big)\sin{\int^s\kappa_{n-1}(u)du}\\
&-\Big(B+\int\Big[\kappa_{n-2}G_{n-2}\cos{\int\kappa_{n-1}ds}\Big]ds\Big)\cos{\int^s\kappa_{n-1}(u)du}.
\end{array}
\end{equation}
for some constants $A$ and $B$.
\end{theorem}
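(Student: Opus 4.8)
The plan is to observe that both implications are essentially already prepared by the earlier results, so that very little genuinely new computation is needed. For the forward direction I would simply point back to the proof of Theorem \ref{th-main}: if $\alpha$ is a cylindrical helix then the helix condition forces $G_n'+\kappa_{n-1}G_{n-1}=0$, and combining this with the defining relation $\kappa_{n-1}G_n=\kappa_{n-2}G_{n-2}+G_{n-1}'$ and passing to the variable $t(s)=\int^s\kappa_{n-1}(u)\,du$ produces the second order equation $G_n''(t)+G_n(t)=\kappa_{n-2}G_{n-2}/\kappa_{n-1}$, whose general solution is (\ref{u10}). Substituting that solution back into $G_n'=-\kappa_{n-1}G_{n-1}$ yields exactly (\ref{u11}), which is (\ref{u244}). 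Hence the first half of the statement is nothing more than a reading of formulas (\ref{u10})--(\ref{u11}) already established.

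The real work is the converse, and here the plan is to reduce everything to Theorem \ref{th-2}. I would abbreviate $\phi(s)=\int^s\kappa_{n-1}(u)\,du$ and introduce
$$P(s)=A-\int\kappa_{n-2}G_{n-2}\sin\phi\,ds,\qquad Q(s)=B+\int\kappa_{n-2}G_{n-2}\cos\phi\,ds,$$
so that hypothesis (\ref{u244}) reads $G_{n-1}=P\sin\phi-Q\cos\phi$, while $P'=-\kappa_{n-2}G_{n-2}\sin\phi$, $Q'=\kappa_{n-2}G_{n-2}\cos\phi$ and $\phi'=\kappa_{n-1}$. Differentiating the assumed expression for $G_{n-1}$, the two terms carrying $\kappa_{n-2}G_{n-2}$ combine through $\sin^2\phi+\cos^2\phi=1$ and give $G_{n-1}'=-\kappa_{n-2}G_{n-2}+\kappa_{n-1}(P\cos\phi+Q\sin\phi)$. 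Feeding this into the defining relation $\kappa_{n-1}G_n=\kappa_{n-2}G_{n-2}+G_{n-1}'$ and cancelling (recall $\kappa_{n-1}\neq0$) identifies $G_n=P\cos\phi+Q\sin\phi$, which is precisely (\ref{u10}).

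With both $G_{n-1}$ and $G_n$ now written in closed form in terms of $P$, $Q$ and $\phi$, the last step is a direct differentiation of $G_n$: using $P'$, $Q'$ and $\phi'$ again, the $\kappa_{n-2}G_{n-2}$ contributions cancel and one is left with $G_n'=-\kappa_{n-1}(P\sin\phi-Q\cos\phi)=-\kappa_{n-1}G_{n-1}$. This is exactly the condition $dG_n/ds=-\kappa_{n-1}G_{n-1}$ of (\ref{u24}), so Theorem \ref{th-2} immediately yields that $\alpha$ is a cylindrical helix.

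I do not expect a genuine obstacle, since the converse reduces to trigonometric bookkeeping; the single point to watch is that the $G_n$ appearing in Theorem \ref{th-2} is the recursively defined function of (\ref{u211}), so I must verify that the closed form $P\cos\phi+Q\sin\phi$ extracted from the hypothesis genuinely coincides with that recursive $G_n$ before invoking the theorem, rather than treating $G_n$ as an independent $C^2$-function. Keeping the substitution between the variables $s$ and $t$ consistent inside the iterated integrals is the only place where a stray factor of $\kappa_{n-1}$ or a sign could slip.
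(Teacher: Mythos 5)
Your proposal is correct and follows essentially the same route as the paper: the converse is the same verification of the two conditions of (\ref{u24}) followed by an appeal to Theorem \ref{th-2}, and your identification $G_n=P\cos\phi+Q\sin\phi$, $G_{n-1}=P\sin\phi-Q\cos\phi$ checks out. For the forward direction the paper introduces the conserved quantities $m(s)$ and $n(s)$ and shows they are constant, but these are exactly your $P+\int\kappa_{n-2}G_{n-2}\sin\phi\,ds=A$ and the analogous expression for $B$, i.e.\ the integrating-factor form of the same ODE solution (\ref{u10})--(\ref{u11}) you cite from the proof of Theorem \ref{th-main}, so the two arguments coincide in substance.
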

\begin{proof}
Suppose that $\alpha$ is a cylindrical helix. By using  Theorem \ref{th-2}, let define $m(s)$ and $n(s)$ by
$$\phi(s)=\int^s\kappa_{n-1}(u)du,$$
\begin{equation}\label{u26}
\begin{array}{ll}
m(s)=&G_{n}(s)\cos\phi+G_{n-1}(s)\sin\phi+\int\,\kappa_{n-2}G_{n-2}\sin\phi\,ds,\\
n(s)=&G_{n}(s)\sin\phi-G_{n-1}(s)\cos\phi-\int\,\kappa_{n-2}G_{n-2}\cos\phi\,ds.
\end{array}
\end{equation}
If we differentiate equations (\ref{u26}) with respect to $s$ and taking into  account of (\ref{u244}) and (\ref{u24}), we
obtain  $\dfrac{dm}{ds}=0$ and $\dfrac{dn}{ds}=0$. Therefore, there exist constants $A$ and $B$ such that $m(s)=A$ and $n(s)=B$.
By substituting  into (\ref{u26}) and solving the resulting equations for $G_{n-1}(s)$, we get
$$
G_{n-1}(s)=\Big(A-\int\,\kappa_{n-2}G_{n-2}\sin{\phi}\,ds\Big)\sin{\phi}-
\Big(B+\int\,\kappa_{n-2}G_{n-2}\cos{\phi}\,ds\Big)\cos{\phi}.
$$

Conversely, suppose that (\ref{u244}) holds. In order to apply Theorem \ref{th-2}, we define $G_n(s)$ by
$$G_n(s)=\Big(A-\int\,\kappa_{n-2}G_{n-2}\sin{\phi}\,ds\Big)\cos{\phi}+
\Big(B+\int\,\kappa_{n-2}G_{n-2}\cos{\phi}\,ds\Big)\sin{\phi}.$$
with $\phi(s)=\int^s\kappa_{n-1}(u) du$. A direct differentiation of (\ref{u244}) gives
$$
G_{n-1}^{\prime}=\kappa_{n-1}G_{n}-\kappa_{n-2}G_{n-2}.
$$
This shows the left condition in (\ref{u24}). Moreover, a straightforward computation leads to
$G_{n}'(s)=-\kappa_{n-1}G_{n-1}$, which finishes the proof.
\end{proof}

We end this section with a characterization of cylindrical helices only in terms of the curvatures of $\alpha$. From the definitions of $G_i$ in (\ref{u211}),  one can express the functions $G_i$ in terms of $G_3$ and the curvatures of $\alpha$ as follows:
\begin{equation}\label{u261}
\begin{array}{ll}
G_j=\sum_{i=0}^{j-3}\,A_{ji}G_3^{(i)},\,\,\,3\leq j \leq n,
\end{array}
\end{equation}
where
$$
G_3^{(i)}=\dfrac{d^{(i)}G_3}{ds^i},\,\,\,G_3^{(0)}=G_3=\dfrac{\kappa_1}{\kappa_2}.
$$
Then
$$\begin{array}{ll}
G_4=\kappa_3^{-1}G_3'=A_{41}G_3'+A_{40}G_3,&  A_{41}=\kappa_3^{-1}, A_{40}=0\\
G_5=A_{52}G_3''+A_{51}G_3'+A_{50}G_3, & A_{52}=\kappa_{4}^{-1}A_{41}, A_{51}=\kappa_{4}^{-1}A_{41}', A_{50}=\kappa_{4}^{-1}\kappa_3
\end{array}$$
and so on.  Define the following functions:
$$A_{30}=1, A_{40}=0$$
$$ A_{j0}=\kappa_{j-1}^{-1}\kappa_{j-2}A_{(j-2)0}+\kappa_{j-1}^{-1}A_{(j-1)0}',\ 5\leq j \leq n$$
$$\begin{array}{lll}
A_{j(j-3)}&=\kappa_{j-1}^{-1}\kappa_{j-2}^{-1}\kappa_{j-3}^{-1}...\kappa_{4}^{-1}\kappa_{3}^{-1},& 4\leq j \leq n\\
A_{j(j-4)}&=\kappa_{j-1}^{-1}\Big(\kappa_{j-2}^{-1}\kappa_{j-3}^{-1}\ldots\kappa_{4}^{-1}\kappa_{3}^{-1}\Big)'+
\kappa_{j-1}^{-1}\kappa_{j-2}^{-1}\Big(\kappa_{j-3}^{-1}\ldots\kappa_{4}^{-1}\kappa_{3}^{-1}\Big)' & \\
&+\ldots+\kappa_{j-1}^{-1}\kappa_{j-2}^{-1}\kappa_{j-3}^{-1}\ldots\kappa_{4}^{-1}\Big(\kappa_{3}^{-1}\Big)', &5\leq j \leq n\\
A_{ji}&=\kappa_{j-1}^{-1}\kappa_{j-2}A_{(j-2)i}+\kappa_{j-1}^{-1}\Big(
A_{(j-1)i}'+A_{(j-1)(i-1)}
\Big)& 1\leq  i \leq j-5,6\leq j \leq n
\end{array}$$
and $A_{ji}=0$ otherwise.

The second equation of (\ref{u24}), leads the following condition:
\begin{equation}\label{u278}
\begin{array}{ll}
A_{n(n-3)}G_{3}^{(n-2)}&+\Big(A_{n(n-3)}'+A_{n(n-4)}\Big)G_{3}^{(n-3)}\\
&+\sum_{i=1}^{n-4}\Big[A_{ni}'+A_{n(i-1)}+\kappa_{n-1}A_{(n-1)i}\Big]G_3^{(i)}\\
&+\Big(A_{n0}'+\kappa_{n-1}A_{(n-1)0}\Big)G_3=0,\,\,\,n\geq 3.
\end{array}
\end{equation}

As a consequence of (\ref{u278}) and Theorem \ref{th-main}, we have  the following

\begin{corollary} Let $\alpha:I\rightarrow\e^n$ be a unit speed curve in $\e^n$. The next statements are equivalent:
\begin{enumerate}
\item  $\alpha$ is a cylindrical helix.
\item $$\begin{array}{ll}
0&=A_{n(n-3)}\Big(\dfrac{\kappa_1}{\kappa_2}\Big)^{(n-2)}+\Big(A_{n(n-3)}'+A_{n(n-4)}\Big)
\Big(\dfrac{\kappa_1}{\kappa_2}\Big)^{(n-3)}\\
&+\sum_{i=1}^{n-4}\Big[A_{ni}'+A_{n(i-1)}+\kappa_{n-1}A_{(n-1)i}\Big]\Big(\dfrac{\kappa_1}{\kappa_2}\Big)^{(i)}\\
&+\Big(A_{n0}'+\kappa_{n-1}A_{(n-1)0}\Big)\Big(\dfrac{\kappa_1}{\kappa_2}\Big),\hspace*{.5cm}n\geq 3.
\end{array}$$
\item The function
$$\begin{array}{ll}
\sum_{j=3}^{n}\,\sum_{i=0}^{j-3}\,\sum_{k=0}^{j-3}\,A_{ji}A_{jk}\Big(\dfrac{\kappa_1}{\kappa_2}\Big)^{(i)}
\Big(\dfrac{\kappa_1}{\kappa_2}\Big)^{(k)}=C
\end{array}$$
is constant, $j-i\geq 3$, $j-k\geq 3$.
\end{enumerate}
\end{corollary}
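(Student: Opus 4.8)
The plan is to prove the Corollary by establishing the three implications needed for equivalence, relying heavily on Theorem \ref{th-main} and Theorem \ref{th-2}, together with the purely algebraic expansion (\ref{u261}). The key observation is that all three statements are really reformulations of the same vanishing or constancy condition, once the functions $G_i$ are rewritten in terms of $G_3=\kappa_1/\kappa_2$ and its derivatives via the coefficients $A_{ji}$. So the strategy is not to prove anything genuinely new, but to perform the substitution (\ref{u261}) carefully into the criteria already proved.

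First I would establish the equivalence of statement (1) and statement (2). By Theorem \ref{th-2}, $\alpha$ is a cylindrical helix if and only if the second relation in (\ref{u24}), namely $G_n'=-\kappa_{n-1}G_{n-1}$, holds (the first relation in (\ref{u24}) is just the defining recursion for $G_n$ and holds automatically). The heart of this step is to expand $G_n'+\kappa_{n-1}G_{n-1}=0$ using $G_j=\sum_{i=0}^{j-3}A_{ji}G_3^{(i)}$. Differentiating $G_n=\sum_{i=0}^{n-3}A_{ni}G_3^{(i)}$ term by term produces $\sum_i\bigl(A_{ni}'G_3^{(i)}+A_{ni}G_3^{(i+1)}\bigr)$, and after reindexing the second sum (shifting $i\mapsto i-1$) and adding $\kappa_{n-1}G_{n-1}=\kappa_{n-1}\sum_i A_{(n-1)i}G_3^{(i)}$, one collects the coefficient of each derivative $G_3^{(i)}$. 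This is precisely the bookkeeping that yields (\ref{u278}): the top-order term $G_3^{(n-2)}$ carries coefficient $A_{n(n-3)}$, the next carries $A_{n(n-3)}'+A_{n(n-4)}$, the generic middle terms carry $A_{ni}'+A_{n(i-1)}+\kappa_{n-1}A_{(n-1)i}$, and the constant term carries $A_{n0}'+\kappa_{n-1}A_{(n-1)0}$. Since (\ref{u278}) is stated in the excerpt as a consequence of the second equation of (\ref{u24}), I would simply cite it, replacing $G_3$ by $\kappa_1/\kappa_2$ to obtain statement (2) verbatim.

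Next I would handle the equivalence of statement (1) and statement (3). By Theorem \ref{th-main}, $\alpha$ is a cylindrical helix if and only if $\sum_{i=3}^n G_i^2=C$ is constant. Substituting (\ref{u261}) into this sum gives
\[
\sum_{j=3}^n G_j^2=\sum_{j=3}^n\Bigl(\sum_{i=0}^{j-3}A_{ji}G_3^{(i)}\Bigr)\Bigl(\sum_{k=0}^{j-3}A_{jk}G_3^{(k)}\Bigr)
=\sum_{j=3}^n\sum_{i=0}^{j-3}\sum_{k=0}^{j-3}A_{ji}A_{jk}G_3^{(i)}G_3^{(k)},
\]
and replacing $G_3^{(i)}$ by $(\kappa_1/\kappa_2)^{(i)}$ yields exactly the triple sum in statement (3). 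The side conditions $j-i\geq 3$ and $j-k\geq 3$ just record that the inner indices run only up to $j-3$, consistent with $A_{ji}=0$ for $i>j-3$. Thus statement (3) is merely Theorem \ref{th-main} written out in the $A_{ji}$ coefficients, and its equivalence to (1) is immediate.

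The main obstacle, and the only place requiring genuine care, is verifying that the term-by-term differentiation leading to (\ref{u278}) assembles into exactly the stated coefficients, especially at the two boundary orders (the top derivative $G_3^{(n-2)}$ arising only from differentiating the leading term $A_{n(n-3)}G_3^{(n-3)}$, and the matching of $A_{n(n-4)}$ with the derivative of $A_{n(n-3)}$) and the correct handling of the recursion $A_{ji}=\kappa_{j-1}^{-1}\kappa_{j-2}A_{(j-2)i}+\kappa_{j-1}^{-1}(A_{(j-1)i}'+A_{(j-1)(i-1)})$ when relating $A_{ni}$ to $A_{(n-1)i}$. Since the excerpt already presents (\ref{u278}) as derived from (\ref{u24}), I would treat that derivation as given and devote the written proof to the two clean substitution arguments above, noting only that the index ranges in (\ref{u278}) and in statement (3) match because $A_{ji}$ vanishes outside $0\le i\le j-3$.
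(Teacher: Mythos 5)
Your proposal is correct and follows exactly the route the paper intends: the paper presents this corollary as an immediate consequence of the identity (\ref{u278}) (itself obtained by expanding $G_n'+\kappa_{n-1}G_{n-1}=0$ from Theorem \ref{th-2} via the substitution (\ref{u261})) together with Theorem \ref{th-main}, which is precisely your two equivalences $(1)\Leftrightarrow(2)$ and $(1)\Leftrightarrow(3)$. The only difference is that you write out the bookkeeping the paper leaves implicit, which is a faithful elaboration rather than a different argument.
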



\end{document}